\numberwithin{equation}{section}
\theoremstyle{definition}
\newtheorem{definition}{Definition}[section]
\newtheorem{question}[definition]{Question}
\theoremstyle{remark}
\newtheorem{remark}[definition]{Remark}
\theoremstyle{plain}
\newtheorem{theorem}[definition]{Theorem}
\newtheorem{lemma}[definition]{Lemma}
\newtheorem{proposition}[definition]{Proposition}
\newcommand{\zt}{\zeta}
\newcommand{\bas}{\boldsymbol{\epsilon}}
\newcommand{\bdy}{\partial}
\newcommand{\OM}{\Omega}
\newcommand{\D}{\mathbb{D}}
\newcommand{\B}{\mathbb{B}}
\newcommand{\bcdot}{\boldsymbol{\cdot}}
\newcommand\squee[1]{{}^{\raisebox{-1pt}{$\scriptstyle{{#1}}$}}\!s}
\newcommand{\spd}{\widehat{s}}
\newcommand{\Cn}{\mathbb{C}^n}
\newcommand{\C}{\mathbb{C}}
\newcommand{\wtW}{\mathcal{W}}
\newcommand{\re}{{\sf Re}}
\begin{document}

\title[Universal bounds for squeezing functions]{Explicit universal bounds for squeezing functions \\
of ($\mathbb{C}$-)convex domains}

\author{Gautam Bharali}
\address{Department of Mathematics, Indian Institute of Science, Bangalore 560012, India}
\email{bharali@iisc.ac.in}

\author{Nikolai Nikolov}
\address{Institute of Mathematics and Informatics, Bulgarian Academy of Sciences, Acad. G. Bonchev 8,
1113 Sofia, Bulgaria
\vspace{1mm}
\newline Faculty of Information Sciences, State University of Library Studies and Information
Technologies, Shipchenski prohod 69A, 1574 Sofia, Bulgaria}
\email{nik@math.bas.bg}

\begin{abstract}
We prove two separate lower bounds\,---\,one for nondegenerate convex domains and the other for
nondegenerate $\C$-convex (but not necessarily convex) domains\,---\,for the squeezing function that hold
true for all domains in $\C^n$, for a fixed $n\geq 2$, of the stated class. We provide explicit
expressions in terms of $n$ for these estimates.
\end{abstract}

\keywords{Convex domains, $\mathbb{C}$-convex domains, squeezing function, universal estimates}
\subjclass[2020]{Primary: 32F45; Secondary: 32F32}

\maketitle

\vspace{-4.5mm}
\section{Introduction and statement of results}\label{S:intro}
The \emph{squeezing function} of a domain $D\subset \C^n$, denoted by $s_D$, is defined as
\[
  s_D(z)\,:=\,\sup\{s_D(z;F) \mid F: D\to \B^n \; \text{is an injective holomorphic map with $F(z)=0$}\}
\]
(in this paper, $\B^n$ will denote the open Euclidean unit ball in $\Cn$ with centre $0$) where, for
each $F: D\to \B^n$ as above, $s_D(z;F)$ is given by
\[
  s_D(z;F)\,:=\,\sup\{r>0 : r\B^n\subset F(D)\}.
\]
(Note that if $D$ is unbounded, then the above-mentioned class of biholomorphic maps may be empty, in
which case $s_D(z) := 0$.) By definition, $s_D$ is invariant under biholomorphic maps: i.e., for
any biholomorphic map $\Phi$ defined on $D$, $s_{\Phi(D)}\circ \Phi = s_D$. For this reason, the
squeezing function has proven to be of considerable utility. This function was introduced
by Deng--Guan--Zhang in \cite{dengGuanZhang:spsfbd12}. It is closely related to notions
introduced by Liu--Sun--Yau \cite{liuSunYau:cmmsRsI04} and Yeung \cite{yeung:gdusf09}, most notably that
of holomorphic homogeneous regularity. A domain $D$ is said to be \emph{holomorphic homogeneous regular}
if $\inf_{z\in D}s_D(z) > 0$. A number of facts concerning the complex geometry of a domain
$D$ are known if $D$ is holomorphic homogeneous regular: we refer the reader to Section~1 of
\cite{kimZhang:uspbccdCn16} and to the references therein.
\smallskip

Over the years, several families of domains have been shown to be holomorphic homogeneous regular.
We will not list these families here. Instead, we shall focus on two classes of domains that are known
to be holomorphic homogeneous regular: namely, the class of nondegenerate convex domains and the class
of nondegenerate $\C$-convex domains (we say that a domain $D$ is \emph{nondegenerate} if $D$
contains no complex lines; clearly, $s_D\equiv 0$ if $D$ contains a complex line).
\smallskip

The goal of this paper is to examine more closely an interesting property of the
above-mentioned classes as a whole. Specifically, these two classes admit \emph{universal lower bounds}
for the squeezing function: by this, we mean that there exists a constant $\kappa_n>0$ such that
\[
  s_D(z) \geq \kappa_n \quad \forall z\in D \; \; \text{and for \textbf{every} $D\subset \Cn$},
\]
where
\begin{itemize}[leftmargin=24pt]
  \item $D$ is a nondegenerate convex domain, or
  \item $D$ is a nondegenerate $\C$-convex domain.
\end{itemize}
The existence of universal lower bounds follows, in the convex case, from the work of
Frankel \cite{frankel:psmp_affine-geom1991} (the function $s_D$ was introduced more recently but the
results in \cite{frankel:psmp_affine-geom1991} can be reinterpreted in terms of $s_D$). In the
$\C$-convex case, it first was hinted at, to the best of our knowledge, in \cite{nikolovAndreev:bbsfCdpd17}
by Nikolov--Andreev. In this paper, we revisit this phenomenon to find explicit estimates
for $\kappa_n$.To this end, an observation by Nikolov--Andreev proves to be more useful.
We make precise the quantitative basis for this observation. In the process, we find two sets\,---\,for
convex and $\C$-convex domains, respectively\,---\,of \textbf{explicit} universal lower bounds. We ought
to say more about this, but (after one explanatory remark) we first present the two theorems
just alluded to.
\smallskip

Determining when a domain $D\subset \Cn$, $n\geq 2$, is holomorphic homogeneous regular requires
estimating $s_D$. To this end, it is sometimes useful to work with the squeezing function
$\squee{\mathcal{D}}_D$\,---\,i.e., the \emph{squeezing function based on $\mathcal{D}$} of the
domain $D$\,---\,where $\mathcal{D}$ is a unit ball, with respect to some norm $\|\bcdot\|_{\mathcal{D}}$,
such that $\mathcal{D}$ is homogeneous. See, for instance, \cite{guptaPant:sfcp20, rongYang:Figsf22}, in
which these auxiliary squeezing functions are introduced. With this in mind, we also present explicit
universal estimates for $\squee{\D^n}_D$. Now, $\squee{\D^n}_D$ is defined in a manner similar to $s_D$,
with $\D^n$ replacing $\B^n$ in the definition above. (Here, $\D$ denotes the open unit disc in $\C$ with
centre $0$.) For simplicity of notation, we shall write $\spd_D :=\,\squee{\D^n}_D$ and set
$c_n := \sqrt{4^n\,-\,1}/\sqrt{3}$. With these words, we present:

\begin{theorem}\label{T:sf_cvx}
For any nondegenerate convex domain $D\subset \Cn$, $n\geq 2$, the
squeezing functions $s_D$ and $\spd_D$ have the following universal lower bounds:
\[
   s_D(z)\,\geq\,\frac{1}{\sqrt{n}\,(2c_n+1)} \; \; \;\text{and}
  \; \; \; \spd_D(z)\,\geq\,\frac{1}{2^{n+1}-1} \quad\forall z\in D.
\]
\end{theorem}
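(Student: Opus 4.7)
The plan is to construct, for each point $z$ in the nondegenerate convex domain $D\subset \Cn$, an explicit complex-affine biholomorphism $A$ of $\Cn$ with $A(z)=0$ whose image $A(D)$ is sandwiched between two concentric polydisks (for the $\spd_D$ estimate) and between two concentric balls (for the $s_D$ estimate). Specifically, the goal is to achieve
\[
\D^n \;\subset\; A(D) \;\subset\; (2^{n+1}-1)\,\D^n, \qquad \B^n \;\subset\; A(D) \;\subset\; \sqrt{n}\,(2c_n+1)\,\B^n.
\]
Granting such a normalization, the rescaled map $F:=A/R_n$ (with $R_n$ the appropriate outer radius) is an injective holomorphic map from $D$ into $\D^n$ (respectively $\B^n$) that sends $z$ to $0$ and whose image contains the concentric polydisk (respectively ball) of radius $1/R_n$. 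The bounds on $\spd_D(z)$ and $s_D(z)$ in Theorem~\ref{T:sf_cvx} then follow at once.

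I would establish the normalization by induction on $n$, having first reduced to $z=0$ by translation. The base case $n=1$ is handled by the Riemann mapping theorem together with a Koebe-type estimate for convex univalent functions: a proper convex subdomain of $\C$ containing $0$ admits a complex scaling $A$ for which $\D\subset A(D)\subset 3\D$, compatible with $c_1=1$ and $2c_1+1 = 3 = 2^{1+1}-1$. For the inductive step, the nondegeneracy of $D$ lets me select a complex-linear functional $\ell\colon\Cn\to\C$ whose image $\ell(D)$ is a proper subdomain of $\C$ and whose kernel $H$ meets $D$ in a nondegenerate convex subdomain of $H$, which has complex dimension $n-1$. Applying the inductive hypothesis to $D\cap H$ and the $n=1$ case to $\ell(D)$, and then stacking these one-dimensional and $(n-1)$-dimensional normalizations as coordinates, produces the candidate affine map $A$ on all of $\Cn$.

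The heart of the argument is the outer radius estimate for $A(D)$: given $w\in A(D)$, one uses convexity of $A(D)$, together with the outer bounds on the slice $A(D)\cap H$ and on the transverse image $\ell(A(D))$, to decompose $w$ into a convex combination of points lying respectively in the slice and on a fibre of $\ell$, and then to bound each piece. For the polydisk version this yields the clean recursion $R_n = 2R_{n-1}+1$ with $R_1=3$, solving to $R_n=2^{n+1}-1$. For the ball version the analogous decomposition produces a Pythagorean-type recursion $c_n^2 = 4 c_{n-1}^2 + 1$, equivalently $c_n = \sqrt{(2c_{n-1})^2 + 1}$ with $c_1=1$, solving to $c_n=\sqrt{(4^n-1)/3}$; the additional $\sqrt{n}$ factor in the $s_D$ bound reflects the inclusion $\D^n\subset \sqrt{n}\,\B^n$ one must pay for when converting a polydisk-aligned normalization to a ball-aligned one.

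The main obstacle I foresee is precisely this outer radius estimate. A convex body containing a prescribed inner ball can in principle sprout protrusions tilted far away from any fixed hyperplane slice, and extracting the sharp factor of $2$ in the polydisk recursion $R_n = 2R_{n-1}+1$---or the exact Pythagorean combination in the ball recursion---requires a careful fibrewise application of convexity along complex lines meeting $H$ transversely. This is the quantitative basis of the Nikolov--Andreev observation alluded to in the introduction: the linear slicing approach survives coherently across complex directions thanks to convexity, and the constants $2^{n+1}-1$ and $\sqrt{n}(2c_n+1)$ emerge from the tightness of the fibrewise tilt analysis.
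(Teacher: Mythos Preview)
Your proposal has a genuine gap: you insist that the normalizing map $A$ be \emph{complex-affine}, and then ask for $A(D)\subset (2^{n+1}-1)\D^n$. But a nondegenerate convex domain need not be bounded, and no affine map can send an unbounded domain into a polydisk. Concretely, the Siegel domain $D=\{(z_1,z_2)\in\C^2:\text{Re}\,z_2>|z_1|^2\}$ is convex, contains no complex line, and is unbounded; the same obstruction already appears in your base case, where a half-plane is a proper convex subdomain of $\C$ that no complex scaling can place inside $3\D$. Your appeal to the Riemann mapping theorem in dimension one tacitly abandons the affine constraint, but then the inductive step---which relies on stacking a one-dimensional and an $(n-1)$-dimensional \emph{affine} normalization into a product affine map---no longer makes sense.

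The paper confronts exactly this issue. Its normalization $A\circ{\sf T}$ is affine, but the target is not a polydisk: one only obtains $A\circ{\sf T}(D)\subset\{Z:\text{Re}\,Z_j<1,\ j=1,\dots,n\}$, an unbounded product of half-planes. The passage to a bounded model is then made by the \emph{non-affine} product M\"obius map $\Psi(Z)=\big(Z_1/(2-Z_1),\dots,Z_n/(2-Z_n)\big)$, which sends that product of half-planes biholomorphically onto $\D^n$; Lemma~\ref{L:new_ball} controls how much the inner ball $(1/c_n)\B^n$ (respectively the inner polydisk $\frac{1}{2^n-1}\D^n$) shrinks under $\Psi$, producing the factor $2c_n+1$ (respectively $2^{n+1}-1$). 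There is also no induction on $n$: the constants $c_n$ and $2^n-1$ arise in one stroke from a triangular linear-algebra estimate (Lemma~\ref{L:contained}) on the matrix $A$ supplied by Proposition~\ref{P:npz}. Even if you restrict attention to bounded $D$, your outer-radius recursion would need more than control of a single slice $D\cap H$ and of the projection $\ell(D)$: those two data do not by themselves yield the factor $2$ in $R_n=2R_{n-1}+1$ without an argument bounding \emph{all} fibres of $\ell$, which is essentially what the supporting-hyperplane normalization in Proposition~\ref{P:npz} accomplishes.
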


\begin{theorem}\label{T:sf_C-cvx}
For any nondegenerate $\C$-convex domain $D\subset \Cn$, $n\geq 2$, the
squeezing functions $s_D$ and $\spd_D$ have the following universal lower bounds:
\begin{align*}
  s_D(z)\,&\geq\,\frac{1}{\sqrt{n}(\sqrt{c_n}+\sqrt{c_n+1})^2}\,>\,\frac{1}{\sqrt{n}\,(4c_n+2)}
   \; \; \;\text{and} \\
  \spd_D(z)\,&\geq\,\frac{1}{(\sqrt{2^n}+\sqrt{2^n-1})^2}\,>\,\frac{1}{2^{n+2}-2}
  \quad\forall z\in D.
\end{align*}
\end{theorem}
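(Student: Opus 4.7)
My plan is to follow the scheme used for Theorem~\ref{T:sf_cvx}, replacing its convex sandwich estimate with a $\C$-convex analogue whose derivation encodes the quantitative form of the Nikolov--Andreev observation mentioned in the introduction. Fix $z\in D$ and translate so that $z=0$. Since both $s_D$ and $\spd_D$ are invariant under $\C$-linear automorphisms of $\C^n$, it suffices, for $\mathcal{U}\in\{\B^n,\D^n\}$, to exhibit a $\C$-linear map $L$ and scalars $0<r\leq R$ such that
\[
  r\,\mathcal{U}\,\subset\,L(D)\,\subset\,R\,\mathcal{U};
\]
for then $F:=L/R$ is an injective holomorphic map $D\to\mathcal{U}$ with $F(0)=0$ and $F(D)\supset (r/R)\,\mathcal{U}$, so that $\squee{\mathcal{U}}_D(0)\geq r/R$. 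The first assertion of the theorem will follow if the $\B^n$-sandwich is available with $R/r\leq (\sqrt{c_n}+\sqrt{c_n+1})^2$, the extra factor of $\sqrt{n}$ in the denominator appearing because the natural outer body produced by the normalisation is only an ellipsoid, which John's theorem must then convert into a Euclidean ball. The second assertion will follow from an analogous polydisc sandwich with $R/r\leq (\sqrt{2^n}+\sqrt{2^n-1})^2$. The stated strict inequalities reduce to the elementary bound $\sqrt{t(t+1)}<t+\tfrac12$.

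The crux is thus to establish the $\C$-convex sandwich with the precise ratios above. A natural route is to apply the convex sandwich from Theorem~\ref{T:sf_cvx} \emph{not} to $D$ itself, but to a convex auxiliary object naturally attached to $D$ at $0$\,---\,such as a circled $\C$-convex Kobayashi indicatrix, or a convex body obtained by convexifying the complex-line slices of $D$ through $0$\,---\,and then to transfer the resulting inclusions back to $D$. The specific form $(\sqrt{c_n}+\sqrt{c_n+1})^2$ is precisely the positive root of $x^2 = 2\sqrt{c_n}\,x+1$ (equivalently, the larger root of $y^2-(4c_n+2)y+1=0$), and I expect this quadratic to emerge when one optimises over the ``displacement'' created by passing from the convex auxiliary object back to the genuine $\C$-convex $D$. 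The $\C$-convexity hypothesis enters through the line-slice characterisation, which allows each one-complex-dimensional slice through $0$ to be controlled by a disc and thereby feeds into the quadratic inequality.

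The main obstacle is precisely this transfer step. In the convex case one can invoke affine symmetrisation and John's theorem; for a $\C$-convex domain, neither tool is directly available, and one must instead work with the defining property that every complex line meets $D$ in a simply connected planar domain, together with a one-variable estimate responsible for the quadratic whose root yields the constants above. Making the Nikolov--Andreev observation quantitative and sharp is expected to be the most delicate component of the proof, and success in this step is exactly what will produce the explicit universal lower bounds claimed in Theorem~\ref{T:sf_C-cvx}.
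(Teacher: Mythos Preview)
Your proposal has a genuine gap at its very first step: you aim to produce a \emph{$\C$-linear} sandwich $r\,\mathcal{U}\subset L(D)\subset R\,\mathcal{U}$, but for a general nondegenerate $\C$-convex domain no such linear sandwich exists. Even after the paper's own linear normalisation $A\circ{\sf T}$ (Proposition~\ref{P:npz}), the image $A\circ{\sf T}(D)$ is not contained in any dilate of $\D^n$ or $\B^n$; it is contained only in a product $\prod_j\OM_j$ of simply connected planar domains (the coordinate projections), which can be unbounded. So the scheme ``linear map plus John's theorem'' that you sketch cannot close, and the speculation about transferring estimates from a Kobayashi indicatrix or a convexified auxiliary body does not touch the actual obstruction.

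What the paper does instead is use a \emph{nonlinear} map after the linear normalisation. From Proposition~\ref{P:npz}$(b)$ one knows $1\in\bdy\OM_j$ for each $j$; $\C$-convexity (via \cite[Theorem~2.3.6]{a-p-s:cccf04}) guarantees each $\OM_j$ is simply connected. One then takes Riemann maps $\varphi_j:(\OM_j,0)\to(\D,0)$ and sets $\Phi=(\varphi_1,\dots,\varphi_n)$, so that $\Phi\circ A\circ{\sf T}(D)\subset\D^n$. The quantitative content you were looking for comes from the K{\"o}be theorems applied to $f_j:=\varphi_j^{-1}$: since ${\rm dist}(0,\bdy\OM_j)\leq 1$, the $1/4$-theorem gives $|f_j'(0)|\leq 4$, and the distortion theorem then gives $|f_j(\zeta)|\leq 4|\zeta|/(1-|\zeta|)^2$. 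Solving $4r/(1-r)^2\leq c$ for $r$ yields $r\leq c/(1+\sqrt{1+c})^2=:\rho(c)$, and this is exactly the quadratic producing your constant: $1/\rho(1/c_n)=(\sqrt{c_n}+\sqrt{c_n+1})^2$. Combined with Lemma~\ref{L:contained} (which supplies $c=1/c_n$ for $\B^n$ and $c=1/(2^n-1)$ for $\D^n$) and the $1/\sqrt{n}$ scaling of $\D^n$ into $\B^n$, this gives the stated bounds. In short, the missing key lemma is Lemma~\ref{L:new_ball_C}, and the one-variable input is K{\"o}be, not a John-ellipsoid argument.
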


\begin{remark}
Recall that a domain $D\subset \Cn$ is said to be \emph{$\C$-convex} if any non-empty intersection of $D$ with a
complex line $\Lambda\subseteq \Cn$ is a simply connected domain in $\Lambda$. Clearly, any convex domain
is $\C$-convex, but the converse is false. The point of Theorem~\ref{T:sf_cvx} is that by specifically
considering convex domains, the lower bounds obtained for them are stronger than those obtained
in Theorem~\ref{T:sf_C-cvx}. The second (and weaker) set of lower bounds given in Theorem~\ref{T:sf_C-cvx}
are meant to convey, in some sense, how much stronger the lower bounds in Theorem~\ref{T:sf_cvx} are: namely,
the lower bounds in Theorem~\ref{T:sf_cvx} are, due to convexity, close to the double of those
in Theorem~\ref{T:sf_C-cvx}. This is a bit reminiscent of the way the estimate provided by K{\"o}be's
theorem for convex univalent maps on $\D$ is the double of the estimate for univalent maps in general
defined on $\D$.
\end{remark}

\begin{remark}
The methods used in proving the above theorems assume the fact that the domains of
interest are domains in $\Cn$, $n\geq 2$. In fact, the estimates above are
uninformative when taking $n=1$. Moreover, we already know that for any planar domain
$D$ of the type considered in the theorems above, $s_D\equiv 1$ (by the Riemann Mapping
Theorem).
\end{remark}

We must also remark that the inequalities in Theorems~\ref{T:sf_cvx} and~\ref{T:sf_C-cvx} are strict.
This can be deduced by carefully examining the proofs. Since the question of a universal
lower bound being attained as an equality is of potential interest, we shall elaborate upon the
latter statements after proving Theorems~\ref{T:sf_cvx} and~\ref{T:sf_C-cvx}; see Section~\ref{S:sf_C-cvx}.
\smallskip

The inspiration for the above theorems is the proof of \cite[Theorem~1]{nikolovAndreev:bbsfCdpd17}.
The latter theorem mentions the existence of universal lower bounds for $s_D$ for $\C$-complex domains,
but the key factor for this is somewhat implicit in its proof. This ``key factor'' is a set of simple
inequalities, which are presented in Proposition~\ref{P:npz} below. It is these simple inequalities that
make possible:
\begin{itemize}[leftmargin=24pt]
  \item the explicit estimates in the theorems above, 
  \item the improved estimates, even while staying within the framework of
  \cite[Theorem~1]{nikolovAndreev:bbsfCdpd17}, for convex domains in Theorem~\ref{T:sf_cvx}.
\end{itemize}
We ought to mention that \cite[Theorem~1.1]{frankel:psmp_affine-geom1991} by Frankel can be
interpreted as a statement about the existence of universal lower bounds for the squeezing
functions of nondegenerate convex domains. Its proof relies on an argument involving affine
scalings. Interestingly, the scaling approach proves to be somewhat of a deterrent to obtaining
explicit universal lower bounds for the squeezing function, although it serves several other
purposes in \cite{frankel:psmp_affine-geom1991} (also see \cite{kimZhang:uspbccdCn16})! In fact,
Theorem~\ref{T:sf_cvx} can also be seen as a non-scaling-based proof of the holomorphic homogeneous
regularity of a nongenerate convex domain in $\Cn$, $n\geq 2$.
\smallskip

Proposition~\ref{P:npz} will play a crucial role in the proof of Theorems~\ref{T:sf_cvx}
and~\ref{T:sf_C-cvx}. The next section is devoted to this result and to a useful lemma
that follows from Proposition~\ref{P:npz}. The proofs of Theorems~\ref{T:sf_cvx}
and~\ref{T:sf_C-cvx} are presented in Sections~\ref{S:sf_cvx} and~\ref{S:sf_C-cvx},
respectively.
\medskip

\section{Some supporting results}\label{S:complex}
This section is devoted to a few results needed in the proofs of Theorems~\ref{T:sf_cvx}
and~\ref{T:sf_C-cvx}. We first introduce a result that is one of the key ingredients of the
latter proofs. It paraphrases a useful calculation that is given in
\cite{nikolovPflugZwonek:eimCcd11} by Nikolov \emph{et al.} (also see
\cite{nikolovPflug:eBkmcdCn03} for some aspects of this calculation).
But first, we must record that a domain $D\subset \Cn$ is said to be
\emph{linearly convex} if for each point $a\in (\Cn\setminus D)$, there exists a complex hyperplane
through $a$ that does not intersect $D$. It turns out (see \cite[Chapter~2]{a-p-s:cccf04}) that
any $\C$-convex domain is linearly convex, which is a fact that we will need.
We will also need some notation, in presenting which we shall follow the notation used
in \cite[Section~1]{nikolovPflugZwonek:eimCcd11}. Let $D\subset\Cn$
be a nondegenerate domain and let $z_0\in D$. There exist $\C$-linear subspaces
$H_0,\dots, H_{n-1}$, and points $a^1,\dots, a^n\in \bdy{D}$, such that
\begin{align*}
  H_0\,&:=\,\Cn, \\
  \|a^1\!-\!z_0\|\,&=\,\sup\left\{r>0 : B_r(H_0, z_0)\subset D\right\}, \\
  H_j\,&:=\,\Cn \ominus {\rm span}_{\C}\{(a^1-z_0),\dots, (a^j-z_0)\}, &&\text{\hspace{-1.7cm}and} \\
  \|a^{j+1}\!-\!z_0\|\,&=\,\sup\left\{r>0 : B_r(H_j, z_0)\subset D\right\}, &&\text{\hspace{-1.7cm}$j=1,\dots,n-1$},
\end{align*}
where, given a complex subspace $V\subset \Cn$, $\Cn\ominus V$ denotes the orthogonal complement
of $V$ in $\Cn$ with respect to the standard Hermitian inner product on $\Cn$, and
\[
  B_r(V, z_0)\,:=\,\{z\in \Cn : (z-z_0)\in V \text{ and } \|z-z_0\|< r\}.
\]

Without loss of generality, we can assume that $z_0 = 0$.
Now, suppose $D$ is convex. For each $a^j$, fix a real hyperplane $\wtW_{j-1}$
such that $(a^j + \wtW_{j-1})$ is a supporting hyperplane of $D$ at $a^j$. If $D$
is $\C$-convex (but not necessarily convex), it is linearly convex. Then, for each $a^j$, there exists a complex
hyperplane $W_{j-1}$ such that $(a^j + W_{j-1})$ does not intersect $D$. If $D$ is convex,
it is also $\C$-convex, and we note that for each supporting hyperplane $(a^j + \wtW_{j-1})$,
\begin{equation}\label{E:real_to_cplx}
  W_j\,:=\,\wtW_j\cap i\wtW_j, \quad j=0,\dots, n-1.
\end{equation}
is a complex hyperplane with the property stated in the previous sentence.

\begin{proposition}\label{P:npz}
Let $D$ be a nondegenerate $\C$-convex domain in $\Cn$, $n\geq 2$,
and assume $0\in D$. Let $a^j\in \bdy{D}$, $j=1,\dots, n,$ be the points described above (taking $z_0 = 0$).
Let ${\sf T}$ be the invertible linear transformation given by
\[
  {\sf T}(z)\,:=\,\sum_{j=1}^n\frac{\langle z, a^j\rangle}{\|a^j\|^2}\,\bas_j.
\]
Fix complex hyperplanes $W_j$, $j=0,\dots, n-1,$ as described above (which, when $D$
is convex, are determined by real hyperplanes $\wtW_j$ as given by \eqref{E:real_to_cplx}).
Then, there exists a $\C$-linear transformation
$A$ such that $[\,A\,]_{{\rm std.}} := [\alpha_{j,\,k}]$ is a lower triangular matrix each of whose diagonal
entries is $1$ and such that:
\begin{enumerate}[leftmargin=30pt, label=$(\alph*)$]
  \item If $D$ is convex, then
  \[
    A\circ {\sf T}(a^j+\wtW_{j-1})\,=\,\{(Z_1,\dots, Z_n)\in \Cn : \re(Z_j)=1\},
    \quad j=1,\dots, n.
  \]
  \item If $D$ is $\C$-convex (but not necessarily convex), then
  \[
    A\circ {\sf T}(a^j+ W_{j-1})\,=\,\{(Z_1,\dots, Z_n)\in \Cn : Z_j=1\},
    \quad j=1,\dots, n.
  \]
\end{enumerate}
In both cases, $|\alpha_{j,1}|,\dots , |\alpha_{j,\,j-1}|\leq 1$ for $j=2,\dots, n$.
\end{proposition}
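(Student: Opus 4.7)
The plan is to work in the basis $\{a^1/\|a^1\|,\ldots,a^n/\|a^n\|\}$ of $\Cn$. Because $a^k\in H_{k-1}$ is orthogonal to each of $a^1,\ldots,a^{k-1}$, this basis is orthonormal and ${\sf T}$ sends $a^k$ to $\bas_k$ for every $k$. Writing $V_{j-1}:={\sf T}(W_{j-1})$, the problem reduces to showing that each $V_{j-1}$ admits a defining equation of the shape $Z_j+\sum_{k<j}\alpha_{j,k}Z_k=0$ with $|\alpha_{j,k}|\leq 1$; the lower triangular matrix whose $j$th row is $(\alpha_{j,1},\ldots,\alpha_{j,j-1},1,0,\ldots,0)$ will then serve as the required $A$. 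Since the $j$th row depends only on $W_{j-1}$, the rows may be defined independently.

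The central geometric step is to establish the containment $H_j\subseteq W_{j-1}$ for each $j$. The open complex ball $B_{\|a^j\|}(H_{j-1},0)\subseteq D$ is disjoint from the complex affine hyperplane $a^j+W_{j-1}$, so the intersection $(a^j+W_{j-1})\cap H_{j-1}$ is a complex affine subspace of $H_{j-1}$ which contains $a^j$ but avoids the open ball. By the standard fact that a complex affine subspace passing through a boundary point of an open complex ball but missing its interior must lie in the complex tangent hyperplane at that point (provable by expanding $\|a^j+tv\|^2$ in $t\in\C$ and letting $t\to 0$), one obtains $(a^j+W_{j-1})\cap H_{j-1}\subseteq a^j+H_j$, i.e., $W_{j-1}\cap H_{j-1}\subseteq H_j$. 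The dimension inequality $\dim(W_{j-1}\cap H_{j-1})\geq n-j=\dim H_j$ upgrades this to equality, giving $H_j\subseteq W_{j-1}$. Choosing a $\C$-linear functional $\ell_j$ with $\ker\ell_j=W_{j-1}$, it follows that $\ell_j(a^k)=0$ for $k>j$; moreover $\ell_j(a^j)\neq 0$, since otherwise $0=a^j-a^j\in a^j+W_{j-1}$ would contradict $0\in D$. Setting $\alpha_{j,k}:=\ell_j(a^k)/\ell_j(a^j)$ for $k<j$, a direct check confirms that $A\circ{\sf T}(a^j+W_{j-1})=\{Z_j=1\}$, which proves Part (b).

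For Part (a), an extra observation is needed: $\ell_j(a^j)$ is real and positive whenever $D$ is convex. Inside the complex slice $H_{j-1}$, the convex set $D\cap H_{j-1}$ contains the round ball $B_{\|a^j\|}(H_{j-1},0)$, and both have $a^j$ as a boundary point; hence the real supporting hyperplane of $D\cap H_{j-1}$ at $a^j$ within $H_{j-1}$ must coincide with the unique real tangent hyperplane of the ball at $a^j$, which is perpendicular to $a^j$. This forces $\re\ell_j|_{H_{j-1}}$ to be a positive real multiple of $\re\snp{\cdot}{a^j}|_{H_{j-1}}$, and hence by $\C$-linearity $\ell_j|_{H_{j-1}}=\lambda\snp{\cdot}{a^j}|_{H_{j-1}}$ for some $\lambda>0$. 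In particular $\ell_j(a^j)=\lambda\|a^j\|^2>0$ is real. Since the same $A$ constructed above satisfies $\ell_j(z)=\ell_j(a^j)\cdot Z_j$ when $Z=A\circ{\sf T}(z)$, the real hyperplane $a^j+\wtW_{j-1}=\{z:\re\ell_j(z)=\ell_j(a^j)\}$ is carried by $A\circ{\sf T}$ onto $\{\re Z_j=1\}$.

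Finally, the bound $|\alpha_{j,k}|\leq 1$ for $k<j$ comes from the observation that the complex disc $\{\lambda a^k:|\lambda|<1\}$ lies in $B_{\|a^k\|}(H_{k-1},0)\subseteq D$ and is therefore disjoint from $a^j+W_{j-1}$. Evaluating $\ell_j$ gives $\lambda\ell_j(a^k)\neq\ell_j(a^j)$ for all $|\lambda|<1$, so when $\ell_j(a^k)\neq 0$ the quotient $\ell_j(a^j)/\ell_j(a^k)$ must lie outside the open unit disc, i.e., $|\alpha_{j,k}|\leq 1$; an identical disc-avoidance argument with $\wtW_{j-1}$ in place of $W_{j-1}$, taking real parts at the end, handles the convex case. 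The main obstacle is the geometric containment $H_j\subseteq W_{j-1}$; once that is in hand, everything else reduces to routine linear algebra together with the reality of $\ell_j(a^j)$ in the convex setting.
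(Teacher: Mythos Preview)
Your argument is essentially correct and, for the bound $|\alpha_{j,k}|\leq 1$, is the same idea as the paper's: the paper assumes $|\alpha_{j,k}|>1$, takes $p=(1/\alpha_{j,k})\bas_k\in\Delta^n\subset{\sf T}(D)$, and observes that $A(p)$ has $j$th coordinate equal to $1$, contradicting $p\in{\sf T}(D)$. Your disc $\{\lambda a^k:|\lambda|<1\}$ is precisely ${\sf T}^{-1}(\D\bas_k)$, so this is the same contradiction, phrased in $D$ rather than in ${\sf T}(D)$; in fact your version is marginally cleaner because you only use $\D\bas_k\subset{\sf T}(D)$ and never need the inclusion $\Delta^n\subset{\sf T}(D)$ (which in the $\C$-convex case requires \cite[Lemma~15]{nikolovPflugZwonek:eimCcd11}). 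The main difference is that the paper simply cites \cite[Section~1]{nikolovPflugZwonek:eimCcd11} for the existence and form of $A$, whereas you supply a self-contained construction via the geometric containment $H_j\subseteq W_{j-1}$; your derivation of that containment (tangency of the affine slice to the ball in $H_{j-1}$, plus a dimension count) is a nice, transparent replacement for the cited material.

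There is, however, a genuine imprecision in your Part~(a). The assertion ``this forces $\re\ell_j|_{H_{j-1}}$ to be a positive real multiple of $\re\snp{\cdot}{a^j}|_{H_{j-1}}$'' does not follow from what precedes it: you have only pinned down $\wtW_{j-1}\cap H_{j-1}$, whereas $\re\ell_j|_{H_{j-1}}$ depends on the normalisation of $\ell_j$, which you fixed only up to a nonzero \emph{complex} scalar (``choosing a $\C$-linear functional $\ell_j$ with $\ker\ell_j=W_{j-1}$''). The clean fix is to normalise $\ell_j(a^j)=1$ (this leaves your $\alpha_{j,k}$ unchanged). Then $\ell_j|_{H_{j-1}}$ and $\snp{\cdot}{a^j}/\|a^j\|^2$ are two $\C$-linear functionals on $H_{j-1}$ with the same kernel $H_j$ and the same value at $a^j$, so they agree; hence $\ker(\re\ell_j)\cap H_{j-1}=\{v\in H_{j-1}:\re\snp{v}{a^j}=0\}=\wtW_{j-1}\cap H_{j-1}$ by your tangency argument. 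Since $\ker(\re\ell_j)$ and $\wtW_{j-1}$ are real hyperplanes that both contain $W_{j-1}$ and agree on $H_{j-1}\not\subset W_{j-1}$, they coincide, giving $A\circ{\sf T}(a^j+\wtW_{j-1})=\{\re Z_j=1\}$ as required.
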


\begin{remark}
To clarify the notation used: each $\bas_j$ is a vector in $(\bas_1,\dots, \bas_n)$\,---\,the standard
ordered basis of $\Cn$; given a linear transformation $T: \Cn\to \Cn$, $[\,T\,]_{{\rm std.}}$ denotes
the matrix representation of $T$ relative to the standard basis; and $\langle\bcdot,\bcdot\rangle$
denotes the standard Hermitian inner product.
\end{remark}
\begin{proof}
The existence of the matrix $A$ is precisely the construction given in
\cite[Section~1]{nikolovPflugZwonek:eimCcd11}. So, all that needs to be proved are the inequalities
for $|\alpha_{j,\,k}|$. These inequalities are claimed in \cite{nikolovAndreev:bbsfCdpd17}.
Since these are crucial for the explicit estimates made below, we provide
a proof of these inequalities. Notice that, by construction, the discs $\D\bas_j\subset {\sf T}(D)$,
$j = 1,\dots, n$. Then, from \cite[Lemma~15]{nikolovPflugZwonek:eimCcd11}, it follows (and which is
obvious when $D$ is convex) that
\[
  \Delta^n\,:=\,\{(w_1,\dots, w_n)\in \Cn: |w_1|+\dots+|w_n|<1\} \subset {\sf T}(D).
\]
Suppose there exists some $j: 2\leq j\leq n$ and $k: 1\leq k\leq j-1$ such that
$|\alpha_{j,\,k}| > 1$. Then $|1/\alpha_{j,\,k}| < 1$, whence the point
$p := (1/\alpha_{j,\,k})\bas_k \in \Delta^n$. On the other hand, if we write
\[
  (Z_1,\dots, Z_n)\,:=\,A(p),
\]
then $Z_j=1$. Whether $D$ is convex or not, this means that
\[
  A(p)\in A\circ {\sf T}(a^j+W_{j-1})\subset \Cn\setminus A\circ {\sf T}(D),
\]
which implies that $p\notin {\sf T}(D)$. But this contradicts the fact that $p\in \Delta^n$.
This establishes the desired inequalities.
\end{proof}

From the last proposition, we can deduce the following estimates. Recall that
$c_n = \sqrt{4^n-1}/\sqrt{3}$.

\begin{lemma}\label{L:contained}
Let $D\subset \Cn$ be as in Proposition~\ref{P:npz} and, for a choice of $a^1,\dots, a^n\in
\bdy{D}$ as described above, let $A$ be as given by Proposition~\ref{P:npz}. Then
\begin{align}
  \frac{1}{2^n\!-\!1}\D^n\,&\subset\,A(\Delta^n), \label{E:pd_contained} \\
  (1/c_n)\B^n\,&\subset\,A(\Delta^n). \label{E:ball_contained}
\end{align}
\end{lemma}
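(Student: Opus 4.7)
\medskip

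The plan is to reduce both inclusions to a single analytic problem. Since $\Delta^n = \{w : |w_1|+\cdots+|w_n| < 1\}$, showing that $E \subset A(\Delta^n)$ for a set $E$ is equivalent to showing that, for every $Z \in E$, the vector $w := A^{-1}(Z)$ satisfies $\sum_j |w_j| < 1$. Thus the entire lemma boils down to bounding $\sum_j |w_j|$ in terms of either $\max_j |Z_j|$ (for \eqref{E:pd_contained}) or $\|Z\|$ (for \eqref{E:ball_contained}).

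To set up the bound, I would use the lower-triangular form of $[A]_{\rm std.} = [\alpha_{j,k}]$ from Proposition~\ref{P:npz} together with the fact that $\alpha_{j,j}=1$ and $|\alpha_{j,k}| \leq 1$ for $k<j$. The identity $Z = Aw$ yields $w_j = Z_j - \sum_{k<j} \alpha_{j,k} w_k$ for each $j$, and hence the \emph{key recursion}
\[
  |w_j|\,\leq\,|Z_j| \,+\, \sum_{k<j} |w_k|, \quad j=1,\dots,n.
\]
For \eqref{E:pd_contained}, set $M := \max_j |Z_j|$ and $S_j := |w_1|+\cdots+|w_j|$. The recursion becomes $S_j \leq 2 S_{j-1} + M$, so by induction $S_n \leq (2^n - 1)M$. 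If $Z \in (2^n-1)^{-1}\D^n$, then $M < (2^n-1)^{-1}$, and consequently $\sum_j |w_j| = S_n < 1$, as needed.

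For \eqref{E:ball_contained}, I would iterate the same recursion to obtain, for $j \geq 2$,
\[
  |w_j|\,\leq\,|Z_j|\,+\,\sum_{k=1}^{j-1} 2^{j-1-k}\,|Z_k|,
\]
which is a straightforward induction on $j$ (the $j=1$ case giving $|w_1|\le|Z_1|$ separately). Summing over $j$, the coefficient of $|Z_k|$ in $\sum_j |w_j|$ equals $1 + \sum_{j=k+1}^{n} 2^{j-1-k} = 2^{n-k}$, so $\sum_j |w_j| \leq \sum_{k=1}^n 2^{n-k} |Z_k|$. An application of the Cauchy--Schwarz inequality then gives
\[
  \sum_j |w_j|\,\leq\,\Bigl(\sum_{k=1}^n 4^{n-k}\Bigr)^{1/2}\|Z\| \,=\,\sqrt{\tfrac{4^n-1}{3}}\,\|Z\|\,=\,c_n\,\|Z\|,
\]
and \eqref{E:ball_contained} follows at once. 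The only mildly delicate step is the bookkeeping in the iterated recursion and the coefficient sum leading to $c_n$; everything else is forced by the structure of $A$ supplied by Proposition~\ref{P:npz}.
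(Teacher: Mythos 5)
Your proof is correct and follows essentially the same route as the paper: both reduce the inclusions to the bound $\sum_j|w_j|\leq\sum_{k=1}^n 2^{n-k}|Z_k|$ for $w=A^{-1}(Z)$ and then conclude via the sup-norm estimate (for the polydisc) and Cauchy--Schwarz (for the ball), arriving at the same constants $2^n-1$ and $c_n$. The only cosmetic difference is that you obtain the per-coordinate bound by iterating the recursion $w_j=Z_j-\sum_{k<j}\alpha_{j,k}w_k$, whereas the paper phrases the same induction as a structural description of the entries of $A^{-1}$ (each off-diagonal coefficient being a sum of $2^{j-k-1}$ monomials of modulus at most $1$).
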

\begin{proof}
Let $\mathscr{D}$ denote either $\D^n$ or $\B^n$. For $c>0$,
\begin{equation}\label{E:contained_goals}
  c\mathscr{D}\subset A(\Delta^n)\,\iff\,cA^{-1}(\mathscr{D})\subset \Delta^n.
\end{equation}
Following the notation of Proposition~\ref{P:npz}, let $Z_j$ denote the $j$-th coordinate of a point
in $\Cn$. Set
\[
  (w_1,\dots, w_n)\,:=\,A^{-1}(Z).
\]
A simple argument by induction shows that:
\begin{itemize}[leftmargin=24pt]
  \item[$(*)$] For $j\geq 2$, $w_j$ is a linear form depending on $Z_1,\dots, Z_j$ where the
  coefficient of $Z_j$ is $1$ and the coefficient of each $Z_k$, $k=1,\dots, j-1$, comprises
  $2^{j-k-1}$ monomials that are products of the terms in the set
  $\cup_{\nu=2}^j\{\alpha_{\nu,1},\dots, \alpha_{\nu,\,\nu-1}\}$.
\end{itemize}
From $(*)$ and the last assertion of Proposition~\ref{P:npz}, we have:
\[
  |w_j|\,\leq\,|Z_j|+\sum_{k=1}^{j-1}2^{j-k-1}|Z_k|, \quad j=2,\dots, n.
\]
Therefore
\begin{equation}\label{E:bound}
  |w_1|+\dots +|w_n|\,\leq\,|Z_1|+\sum_{j=2}^n\Big(|Z_j|+
  \sum\nolimits_{k=1}^{j-1}|Z_k|\Big)\,=\,\sum_{j=1}^n 2^{n-j}|Z_j|.
\end{equation}
With these preparations, we can now give:
\medskip

\noindent{\emph{The proof of \eqref{E:pd_contained}:} By \eqref{E:bound}, for $c>0$:
\[
  c\big(\,|Z_n|+2|Z_{n-1}|+\dots+2^{n-1}|Z_1|\,\big)< 1 \; \; \forall Z\in
  \D^n\,\Rightarrow\,cA^{-1}(\D^n)\subset \Delta^n.
\]
From this, and since $|Z_j|<1$ for $j=1,\dots, n$ whenever $Z\in \D^n$, we get:
\[
  cA^{-1}(\D^n)\subset \Delta^n \text{ whenever } c(2^n - 1)\leq 1.
\]
This, in view of \eqref{E:contained_goals}, implies \eqref{E:pd_contained}.}
\medskip

\noindent{\emph{The proof of \eqref{E:ball_contained}:} Since
\[
  |Z_n|+2|Z_{n-1}|+\dots+2^{n-1}|Z_1|\,\leq\,\sqrt{4^{n-1}+ 4^{n-2}\dots+ 1}\,\|Z\|,
\]
\eqref{E:bound} implies that for $c>0$:
\[
  c_nc\|Z\| < 1 \; \; \forall Z\in \B^n\,\Rightarrow\,cA^{-1}(\B^n)\subset \Delta^n.
\]
From this, and since $\|Z\|<1$ whenever $Z\in \B^n$, we get:
\[
  cA^{-1}(\B^n)\subset \Delta^n \text{ whenever } c_nc\leq 1.
\]
This, in view of \eqref{E:contained_goals}, implies \eqref{E:ball_contained}.}
\end{proof}

\section{The proof of Theorem~\ref{T:sf_cvx}}\label{S:sf_cvx}
To prove Theorem~\ref{T:sf_cvx}, we first need a simple lemma.

\begin{lemma}\label{L:new_ball}
Let $\psi(\zt):=\zt/(2-\zt)$ and $\Psi(z)\,:=\,\big(\psi(z_1),\dots, \psi(z_n)\big).$ Let $c\in (0,1)$ and
$\tau(c) := c/(2+c)$. Then
\begin{equation}\label{E:ball_rad}
  \tau(c)\B^n\subset \Psi(c\B^n).
\end{equation}
\end{lemma}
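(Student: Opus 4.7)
The plan is to exploit that each $\psi$ is a M\"obius transformation with well-defined inverse $\psi^{-1}(w) = 2w/(1+w)$ on $\D$, hence $\Psi^{-1}(w) = \big(2w_1/(1+w_1),\dots, 2w_n/(1+w_n)\big)$ whenever $w \in \D^n$. Since $\tau(c) = c/(2+c) < 1$, the inverse $\Psi^{-1}$ is defined on $\tau(c)\B^n$, so \eqref{E:ball_rad} is equivalent to the statement
\[
  \Psi^{-1}\!\big(\tau(c)\B^n\big)\,\subset\,c\B^n.
\]
Accordingly, I would fix $w \in \tau(c)\B^n$ and estimate $\|\Psi^{-1}(w)\|$ from above by $c$.

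The key elementary input is that, for each coordinate, $|w_j| \leq \|w\| < \tau(c)$ forces
\[
  |1+w_j|\,\geq\,1-|w_j|\,>\,1 - \tau(c)\,=\,\frac{2}{2+c}.
\]
Therefore each summand satisfies
\[
  \left|\tfrac{2w_j}{1+w_j}\right|^2\,\leq\,\frac{4|w_j|^2}{\big(2/(2+c)\big)^2}\,=\,(2+c)^2\,|w_j|^2,
\]
and summing over $j = 1,\dots, n$ yields
\[
  \|\Psi^{-1}(w)\|^2\,\leq\,(2+c)^2\,\|w\|^2\,<\,(2+c)^2\,\tau(c)^2\,=\,c^2,
\]
which gives $\Psi^{-1}(w) \in c\B^n$ as required.

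There is no real obstacle here; the only point demanding a shred of care is verifying that $\Psi^{-1}$ is indeed defined on the target ball $\tau(c)\B^n$ (so that the set-theoretic reformulation is legitimate), which follows at once from $\tau(c) < 1$. Everything else is a one-line coordinatewise bound on the M\"obius factor $(1+w_j)^{-1}$ combined with the Euclidean norm.
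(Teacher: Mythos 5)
Your proof is correct and follows essentially the same route as the paper: both pass to the coordinatewise inverse $w_j\mapsto 2w_j/(1+w_j)$, bound $|1+w_j|$ from below by $1-|w_j|$, and observe that $2\tau(c)/(1-\tau(c))=c$. The only cosmetic difference is that you fix $r=\tau(c)$ at the outset, whereas the paper derives the condition $r\leq c/(2+c)$ from the requirement $2r/(1-r)\leq c$.
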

\begin{proof}
Note that
\[
  \Psi(c\B^n)\,=\,\left\{w\in \Cn : \bigg(\left|\frac{2w_1}{1+w_1}\right|^2+\dots+
  		\left|\frac{2w_n}{1+w_n}\right|^2\bigg)^{1/2} < c\right\}.
\]
We also have the simple estimate:
\begin{equation}\label{E:mobius_est}
  \frac{2|\zt|}{|1+\zt|}\,\leq\,\frac{2|\zt|}{1-|\zt|} \quad \forall \zt\in \D.
\end{equation}
From the above, it follows that
\[
  \bigg(\left|\frac{2w_1}{1+w_1}\right|^2+\dots+
  		\left|\frac{2w_n}{1+w_n}\right|^2\bigg)^{1/2}\,<\,\frac{2\|w\|}{1-r}
		\quad \forall w\in r\B^n.
\]
From the above, it follows that for any $r>0$ such that
\[
  \frac{2r}{1-r}\,\leq\,c,\mbox{ that is , } 0<r\,\leq\, c/(2+c),
  \]
one has that $r\B^n\subset \Psi(c\B^n)$.
\end{proof}

We are now in a position to give the

\begin{proof}[The proof of Theorem~\ref{T:sf_cvx}]
Fix $z\in D$. Since $s_{(\bcdot)}$ and $\spd_{(\bcdot)}$ are invariant under biholomorphic maps, we
may assume without loss of generality that $0\in D$ and that $z=0$. Furthermore, it suffices to estimate
$s_{A\circ {\sf T}(D)}(0)$ and $\spd_{A\circ {\sf T}(D)}(0)$, where ${\sf T}$ and $A$ are as
given by Proposition~\ref{P:npz}. By construction, the discs $\D\bas_j\subset {\sf T}(D)$,
$j = 1,\dots, n$. Then, owing to convexity,
\begin{equation}\label{E:diam_cvx_1}
  \Delta^n \subset {\sf T}(D).
\end{equation}
By Proposition~\ref{P:npz}-$(a)$,
\[
  A\circ {\sf T}(D)\,\subset\,\{(Z_1,\dots, Z_n)\in \Cn : \re{Z_j}<1, \ j=1,\dots, n\}.
\]
It is well known that $\Psi(Z) := (\psi(Z_1),\dots, \psi(Z_n))$\,---\,where $\psi$ is as
introduced in Lemma~\ref{L:new_ball}\,---\,defines a biholomorphic map
from $\{(Z_1,\dots, Z_n)\in \Cn : \re{Z_j}<1, \ j=1,\dots, n\}$ onto $\D^n$.
\smallskip

Let us first estimate $s_{D}$. By the last statement, it suffices to estimate
$s_{\Psi\circ A\circ {\sf T}(D)}(0)$. By \eqref{E:diam_cvx_1}, \eqref{E:ball_contained}, and
Lemma~\ref{L:new_ball}
\[
  \tau(1/c_n)\B^n\,\subset\,\Psi\circ A\circ {\sf T}(D).
\]
Finally, since a scaling by $1/\sqrt{n}$ maps $\D^n$ into
$\B^n$, by definition (and since $s_{\Psi\circ A\circ {\sf T}(D)}(0) = s_D(z)$), we conclude
that
\[
  s_D(z)\,\geq\,\frac{1}{\sqrt{n}(2c_n+1)} \quad\forall z\in D.
\]

By an argument analogous to the one in the previous paragraph, but using
\eqref{E:diam_cvx_1}, \eqref{E:pd_contained}, and taking $n=1$ in Lemma~\ref{L:new_ball}, we get
\[
  \tau(1/(2^n-1))\D^n\,=\,\frac{1}
  {2^{n+1}\!-\!1}\D^n\,\subset\,\Psi\circ A\circ {\sf T}(D).
\]
Then, by definition (recall that $\Psi\circ A\circ {\sf T}(D)\subset \D^n$),
\[
  \spd_D(z)\,\geq\,\frac{1}{2^{n+1}\!-\!1} \quad\forall z\in D.
\]
\end{proof}

\section{The proof of Theorem~\ref{T:sf_C-cvx}}\label{S:sf_C-cvx}
To prove our theorem, we need the following lemma.

\begin{lemma}\label{L:new_ball_C}
Let $\OM_1,\dots, \OM_n$ be simply connected planar domains such that $0\in \OM_j$ and $1\in \bdy\OM_j$,
$j=1,\dots, n$. For each $j=1,\dots, n,$ let $\varphi_j$ be a Riemann map $\varphi_j : (\OM_j, 0)\to (\D, 0)$.
Let $c\in(0,1]$ be such that
$c\D \subset \OM_j$ for each $j=1,\dots, n$. Let $\Phi : \prod_{j=1}^n\OM_j\to \Cn$ be defined as
\[
  \Phi(z)\,:=\,\big(\varphi(z_1),\dots, \varphi(z_n)\big).
\]
Set $\rho(c) := c/(1+\sqrt{1+c})^2$. Then,
\[
  \rho(c)\B^n\subset \Phi(c\B^n).
\]
\end{lemma}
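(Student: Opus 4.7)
The plan is to apply Koebe's one-quarter and distortion theorems coordinate by coordinate to the inverse Riemann maps $g_j:=\varphi_j^{-1}:(\D,0)\to(\OM_j,0)$. Because $1\in\bdy\OM_j$, one has $d(0,\bdy\OM_j)\leq 1$, and Koebe's $\tfrac14$-theorem applied to the univalent $g_j$ gives $|g_j'(0)|\leq 4\,d(0,\bdy\OM_j)\leq 4$. The classical distortion theorem then yields the uniform-in-$j$ estimate
\[
  |g_j(\zt)|\,\leq\,\frac{|g_j'(0)|\,|\zt|}{(1-|\zt|)^2}\,\leq\,\frac{4|\zt|}{(1-|\zt|)^2},\quad \zt\in\D.
\]

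Given $w\in\rho(c)\B^n$, I would set $z_j:=g_j(w_j)$, which is well-defined since $|w_j|\leq\|w\|<\rho(c)<1$. Using the previous estimate in each coordinate, together with the monotonicity of $r\mapsto 1/(1-r)^2$ on $[0,1)$ and the bound $|w_j|<\rho(c)$, one obtains
\[
  \|z\|^2\,\leq\,\sum_{j=1}^n\frac{16|w_j|^2}{(1-|w_j|)^4}\,\leq\,\frac{16}{(1-\rho(c))^4}\,\|w\|^2\,<\,\frac{16\,\rho(c)^2}{(1-\rho(c))^4}.
\]
It then remains only to identify the right-hand side with $c^2$.

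The last step is purely algebraic: one checks that $\rho(c)=c/(1+\sqrt{1+c})^2$ is the smaller root of $c\rho^2-(2c+4)\rho+c=0$, equivalently of $4\rho=c(1-\rho)^2$. Concretely, $(1+\sqrt{1+c})^2=2+c+2\sqrt{1+c}$, and rationalizing $c/(2+c+2\sqrt{1+c})$ produces the root $(c+2-2\sqrt{1+c})/c$, which satisfies the quadratic by direct substitution. Hence $16\,\rho(c)^2/(1-\rho(c))^4=c^2$, giving $\|z\|<c$, so $z\in c\B^n$ and $\Phi(z)=w$, as required.

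I expect the only real nuisance to be this final algebraic verification; the complex-analytic inputs are applied off the shelf. The role of the hypothesis $1\in\bdy\OM_j$ is precisely to supply the universal constant $4$ in the distortion estimate (it is what forces $d(0,\bdy\OM_j)\leq 1$), while the hypothesis $c\D\subset\OM_j$ is used only implicitly, to ensure $\Phi$ is defined on $c\B^n$ via $c\B^n\subset c\D^n\subset\prod_j\OM_j$.
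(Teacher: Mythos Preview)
Your proof is correct and follows essentially the same route as the paper: both apply K{\"o}be's $\tfrac14$-theorem to the inverse Riemann maps $g_j=\varphi_j^{-1}$ to obtain $|g_j'(0)|\leq 4$ from the hypothesis $1\in\bdy\OM_j$, then combine this with the K{\"o}be distortion (growth) bound $|g_j(\zt)|\leq 4|\zt|/(1-|\zt|)^2$ coordinatewise, and finally solve $4r/(1-r)^2\leq c$ to identify $\rho(c)$ as the admissible radius. The only cosmetic difference is that the paper first writes $\Phi(c\B^n)=\{w:\sum_j|g_j(w_j)|^2<c^2\}$ and argues by inclusion, whereas you explicitly construct the preimage $z=(g_1(w_1),\dots,g_n(w_n))$ and check $\|z\|<c$; these are the same argument.
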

\begin{proof}
Write $f_j := \varphi_j^{-1}$. Since $1\in \bdy\OM_j$, ${\rm dist}(0, \bdy\OM_j)\leq 1$
for each $j=1,\dots, n$. Thus, for each $j$, the K{\"o}be $1/4$ Theorem implies that
$|f_j^\prime(0)|\,\leq\,4$. We now apply the K{\"o}be Distortion Theorem along with the estimate for
$|f_j^\prime(0)|$ to get
\begin{equation}\label{E:Kobe_j}
  |f_j(\zt)|\,\leq\,\frac{4|\zt|}{(1-|\zt|)^2} \quad \forall \zt\in \D,
\end{equation}
for each $j=1,\dots, n$.
\smallskip

Now, note that
\[
  \Phi(c\B^n)\,=\,\left\{w\in \Cn : \sqrt{|f_1(w_1)|^2+\dots+|f_n(w_n)|^2}<c\right\}.
\]
By \eqref{E:Kobe_j}, we have
\[
  \sqrt{|f_1(w_1)|^2+\dots+|f_n(w_n)|^2}\,\leq\,\frac{4}{(1-r)^2}\|w\|
  \quad \forall w\in r\B^n.
\]
From the last two inequalities, it follows that for any $r\in(0,1)$ such that
\[
  \frac{4r}{(1-r)^2}\,\leq\,c,\mbox{ that is, }0<r\,\leq\,\rho(c),
  \]
one has that $r\B^n\subset \Phi(c\B^n)$
\end{proof}

We are now in a position to give the

\begin{proof}[The proof of Theorem~\ref{T:sf_C-cvx}]
Fix $z\in D$. Since $s_{(\bcdot)}$ and $\spd_{(\bcdot)}$ are invariant under biholomorphic maps, we
may assume without loss of generality that $0\in D$ and that $z=0$. Furthermore, it suffices to estimate
$s_{A\circ {\sf T}(D)}(0)$ and $\spd_{A\circ {\sf T}(D)}(0)$, where ${\sf T}$ and $A$ are as
given by Proposition~\ref{P:npz}. By construction, the discs $\D\bas_j\subset {\sf T}(D)$,
$j = 1,\dots, n$. As $D$ is $\C$-convex, it is linearly convex. Thus, owing to
\cite[Lemma~15]{nikolovPflugZwonek:eimCcd11},
\begin{equation}\label{E:diam_cvx}
  \Delta^n \subset {\sf T}(D).
\end{equation}
Let $\pi_j$ denote the projection of $\Cn$ onto the $j$-th coordinate.
Because $A\circ {\sf T}(D)$ is $\C$-convex, it follows from \cite[Theorem~2.3.6]{a-p-s:cccf04}
that $\OM_j:=\pi_j(A\circ {\sf T}(D))$ is simply connected. Furthermore, it follows from
Proposition~\ref{P:npz}-$(b)$ that
\[
  1\in \bdy\OM_j, \quad j=1,\dots, n.
\]
Thus, $\OM_1,\dots, \OM_n$ satisfy the conditions stated in Lemma~\ref{L:new_ball_C}.
For each $j$, let $\varphi_j$ be a Riemann map $\varphi_j : (\OM_j, 0)\to (\D, 0)$. Finally,
define $\Phi : \prod_{j=1}^n\OM_j\to \Cn$ as
\[
  \Phi(z)\,:=\,\big(\varphi(z_1),\dots, \varphi(z_n)\big).
\]

Let us first estimate $s_{D}$. As $\Psi$ is a biholomorphic map, it suffices to estimate
$s_{\Psi\circ A\circ {\sf T}(D)}(0)$. By \eqref{E:diam_cvx} and \eqref{E:ball_contained}
\[
  (1/c_n)\B^n\,\subset\,A\circ {\sf T}(D)\,\subset\,\prod_{j=1}^n\OM_j.
\]
Then Lemma~\ref{L:new_ball_C} gives
\[
  \rho(1/c_n)\B^n\,\subset\,\Phi\circ A\circ {\sf T}(D)\,\subset\,\D^n.
\]
Finally, since a scaling by $1/\sqrt{n}$ maps $\D^n$ into
$\B^n$, by definition (and since $s_{\Psi\circ A\circ {\sf T}(D)}(0) = s_D(z)$), we conclude
that
\[
  s_D(z)\,\geq\,\frac{\rho(1/c_n)}{\sqrt{n}}\,=\,\frac{1}{(\sqrt{c_n} + \sqrt{c_n+1})^2}
  \quad\forall z\in D.
\]

By an argument analogous to the one in the previous paragraph, but using
\eqref{E:diam_cvx}, \eqref{E:pd_contained}, and taking $n=1$ in Lemma~\ref{L:new_ball_C}, we get
\[
  \rho(1/(2^n-1))\D^n\,\subset\,\Phi\circ A\circ {\sf T}(D).
\]
Then, by definition (recall that $\Phi\circ A\circ {\sf T}(D)\subset \D^n$),
\[
  \spd_D(z)\,\geq\,\rho(1/(2^n-1)) = \frac{1}{(\sqrt{2^n}+\sqrt{2^n-1})^2} \quad\forall z\in D.
\]

As for the two weaker inequalities: they follow from the strict concavity of $\sqrt{\bcdot}$ on
$[0,+\infty)$.
\end{proof}

We end with a discussion on a point made in passing in Section~\ref{S:intro}:
i.e., that the inequalities in Theorems~\ref{T:sf_cvx} and~\ref{T:sf_C-cvx} are strict. Since the
classes of domains, for which universal lower bounds are given by these theorems, are so
well-studied, the question of attaining equalities is of interest. To make precise this question (we
shall focus on $s_D$; analogous remarks apply to $\spd_D$), some notation: let
$\mathscr{C}_n$ denote either the class of all nondegenerate convex domains or all nondegenerate
$\C$-convex domains in $\Cn$, $n\geq 2$, and let $\kappa_n > 0$ be a universal lower bound for $s_D$
for the class $\mathscr{C}_n$. One may ask: \emph{does there exist a domain $D\in \mathscr{C}_n$ and 
$z_0\in D$ such that $\kappa_n = s_D(z_0)$?}
\smallskip

Another way to state our assertion in Section~\ref{S:intro} is that, taking the \textbf{specific} values
of $\kappa_n$ given by Theorems~\ref{T:sf_cvx} and~\ref{T:sf_C-cvx}, the answer to the above question
(or its analogue for $\spd_D$) is in the negative. To see why this is so, let us focus on $s_D$
\footnote{\,{The reader will notice that once the condition \eqref{E:intrscn} is stated, a quick argument
can be given \textbf{for $\boldsymbol{s_D}$} since $\bdy\B^n$ is a smooth manifold. The argument that we
provide instead works for both $s_D$ and $\spd_D$.}}. A necessary condition for the answer to the above question
to be, ``Yes'' is that, for some domain $D\in \mathscr{C}_n$ (and some $z_0\in D$), in addition to the
relation \eqref{E:ball_contained}, we must also have
\begin{equation}\label{E:intrscn}
  (1/c_n)\overline{\B^n}\cap \bdy(A\circ {\sf T}(D))\,\neq\,\varnothing
\end{equation}
(note that $A$ is determined by $D$ and $z_0$). Given the manner in which the scaling constant $(1/c_n)$ is
arrived at in the proof of Lemma~\ref{L:contained}, a necessary condition for the intersection in
\eqref{E:intrscn} to be non-empty is that, for the $n\times n$ matrix $[\alpha_{j,\,k}]$ described by
Proposition~\ref{P:npz}, one has
\[
  |\alpha_{j,\,k}|\,=\,1 \quad \forall k: 1\leq k\leq j-1 \text{ and } j: 2\leq j\leq n.
\]
But the above is impossible because it is easy to show\,---\,using an argument similar to that in the proof
of Proposition~\ref{P:npz}\,---\,that
\[
 \nexists\,j=2,\dots, n \text{ such that } |\alpha_{j,\,1}| =\dots = |\alpha_{j,\,j-1}| = 1.
\]
Analogous remarks apply to $\spd_D$.
\smallskip

Since the focus of the above discussion is Lemma~\ref{L:contained}, it suggests that there is scope for
better universal lower bounds. That being said, it would require \emph{very different} ideas to replace
the closing arguments in the proofs of our main theorems\,---\,especially of Theorem~\ref{T:sf_C-cvx}.
Thus, a question that may be more tractable than the one stated above is as follows:

\begin{question}
Let $\mathscr{C}_n$, $n\geq 2$, be as introduced above. Compute
\[
  \kappa_n\,:=\,\inf_{D\in \mathscr{C}_n}\,\inf_{z\in D}s_D(z), \quad\text{and}
  \quad \widehat{\kappa}_n\,:=\,\inf_{D\in \mathscr{C}_n}\,\inf_{z\in D}{\spd}_D(z).
\]
Is there a domain $D\in \mathscr{C}_n$ such that $\inf_{z\in D}s_D(z) = \kappa_n$ or 
$\OM\in \mathscr{C}_n$ such that $\inf_{z\in \OM}\spd_{\OM}(z) = \widehat{\kappa}_n$? 
\end{question} 
\vspace{-1mm}

\section*{Acknowledgements}\vspace{-2.5mm}
We are grateful to the anonymous referee of this work for their helpful suggestions on our exposition.
G.~\!Bharali is supported by a DST-FIST grant (grant no.~TPN-700661). N.~\!Nikolov was partially supported
by the Bulgarian National Science Fund, Ministry of Education and Science of Bulgaria, under contract KP-06-Rila/2.

\end{document}